\documentclass{ifacconf}
\usepackage{amsmath}
\usepackage{amssymb}
\usepackage{mathtools}

\usepackage{xcolor}
\usepackage{graphicx}
\usepackage{float} 
\usepackage{natbib}        
\usepackage{algpseudocode}
\usepackage{algorithm}
\usepackage{subcaption}
\usepackage{caption}

\allowdisplaybreaks[4]

\DeclareMathOperator{\softmax}{softmax}
\DeclareMathOperator{\LSE}{LSE}

\begin{document}
\begin{frontmatter}

\title{Successive~Convex~Optimization for Transformer~Encoder~Model~Predictive~Control}

\author[Oxford]{Xingxiao Chen} 
\author[Oxford]{Mark Cannon} 

\address[Oxford]{Department of Engineering Science, University of Oxford, \{xingxiao.chen, mark.cannon\}@eng.ox.ac.uk}

\begin{abstract}
We propose a data-driven Model Predictive Control (MPC) framework that employs a transformer encoder to generate multi-step predictions. To handle the nonconvex attention mechanism, we derive difference of convex (DC) representations of the transformer encoder components and embed them in a successive convex programming (SCP) iteration. Recursive feasibility and convergence of the SCP iterates are guaranteed, and each iterate yields a solution estimate satisfying the problem constraints. Under mild assumptions, the SCP iteration converges to a locally optimal solution of the MPC problem. The approach is illustrated on a benchmark nonlinear control problem. 
\end{abstract}


\begin{keyword}
\textcolor{black}{Transformer encoder, DC programming, successive convex programming, data-driven model predictive control}
\end{keyword}
\end{frontmatter}

\section{Introduction}

Model Predictive Control (MPC) has proven to be an effective control strategy in diverse application areas. MPC computes a feedback control law by optimizing predicted performance subject to constraints in real-time \citep{kouvaritakis2015model,rawlings2017model}. A key requirement for MPC is a  model of the controlled system with a known degree of accuracy. However, in many  applications, highly nonlinear dynamics and unpredictable disturbances make it difficult to identify and deploy suitable models within an online optimization scheme. Recent advances in machine learning (ML) have introduced alternative ways to learn nonlinear model dynamics, provided sufficiently rich and accurate data on system responses are available \citep{NOROUZI2023105878}. This is particularly useful when a first-principles model is difficult to derive and when the system exhibits complex behaviour. 

The integration of ML and MPC is a rapidly developing research area. This paper focusses on a specific ML technique for representing system dynamics within the MPC framework. We consider learning models whose core is a transformer encoder architecture, which uses attention mechanisms to effectively capture sequential dependencies \citep{10.5555/3295222.3295349}. Previous studies mostly concentrate on using neural networks to predict system behaviour, without explicitly addressing the resulting optimization problem with a neural network serving as a predictor. Among optimization methods employed in this context, Sequential Quadratic Programming (SQP) \citep[e.g.][]{Nocedal2006}, is most commonly used. Motivated by the limitations of conventional nonlinear programming methods such as SQP when neural-network predictors are embedded in MPC, we instead formulate the resulting optimization problem through a DC modelling framework and solve it using the Convex-Concave Procedure (CCP) \citep{Lipp2016}.

Combined with a difference of convex (DC) modelling framework, this approach can handle nonconvex constraints and provide robustness guarantees \citep{9993390,ju_2,KRAUSCH20241699}. In this setting we develop a CCP algorithm tailored to transformer encoder-based predictors by constructing DC representations of the model's components. We discuss the convergence guarantees provided by this algorithm and modifications to the transformer encoder architecture to ensure its compatibility with the proposed optimization technique. 

The paper is organised as follows. Section~\ref{sec:RW} discusses previous work and literature relevant to this paper. Section~\ref{sec:Overview} gives a brief introduction to the MPC design. Section~\ref{sec:TFE} introduces the learning-based predictor architecture. Section~\ref{sec:CCP} demonstrates the modified method based on DC programming and Section~\ref{sec:converge} discusses its convergence properties. Section~\ref{sec:results} presents the experimental results while Section~\ref{sec:conclu} concludes the paper.

\section{Related work}\label{sec:RW}

Recent studies in process control have demonstrated the potential of replacing the prediction model in MPC with neural network models. \citet{wong2018recurrentneuralnetworkbasedmodel} propose MPC using recurrent neural networks (RNNs) to capture complex reaction kinetics in continuous pharmaceutical manufacturing. \citet{MACHALEK2022100172} demonstrate a gated recurrent unit (GRU)-based encoder-decoder structure for multi-step prediction in MPC for a thermal power plant. \citet{MASTI2021109666} present an autoencoder-based system identification method capable of learning Hammerstein-Wiener system dynamics. These approaches obtain multi-step predictions by iteratively applying a one-step model over a finite prediction horizon. However, iterating a one-step model for multi-step prediction has limitations in both accuracy and efficiency. \citet{PARK2023108396} explores a multi-step architecture based on transformers, leveraging their ability to generate predictions in parallel. Compared with commonly used RNNs, transformer architectures demonstrate superior accuracy in capturing long-term dependencies within data sequences due to their attention mechanism  \citep{10.5555/3295222.3295349}. 

The nonlinear structure of the attention mechanism complicates the MPC optimization problem when the predictor incorporates attention blocks. \citet{ZARZYCKI2022229} propose an approach based on Long Short-Term Memory (LSTM) and GRU using linear model approximations. However, approximate predictions based on linear models can result in inaccurate solution estimates and slow convergence of successive linearization iterations. \citet{JUNG2023106226} present an LSTM-based MPC formulation that uses automatic differentiation to provide derivatives to optimization solvers. In~\cite{PARK2023108396} and \cite{JUNG2023106226}, it is shown that existing solvers, such as SLSQP, can solve optimization problems with LSTM or transformer models acting as predictors. 

SLSQP can be sensitive to the initial solution estimate, and poor initialization may result in suboptimal solutions particularly when a transformer encoder block is involved due to its high degree of nonlinearity. Given the nonconvexity introduced by the attention mechanism, optimizing a trained transformer encoder block is challenging. \citet{ergen2022convexifyingtransformersimprovingoptimization} convexify the transformer encoder by substituting the attention block with linear layers, thereby casting the problem as a convex optimization task; however, this is intended to simplify the training process and leads to limited prediction accuracy. \cite{articleAwasthi} discuss how to apply DC programming and the CCP to convolutional neural networks. Inspired by this idea, and more generally by SCP applied to data-driven robust MPC \citep{9993390,KRAUSCH20241699,ju_2,ju_1}, we propose a method for optimizing transformer encoder-based predictors via DC programming, incorporating appropriate modifications to the attention block.

\section{Overview of data-driven MPC design}\label{sec:Overview}

\begin{figure}[b]
      \includegraphics[trim={37mm 55mm 37mm 31mm},clip,scale=0.335]{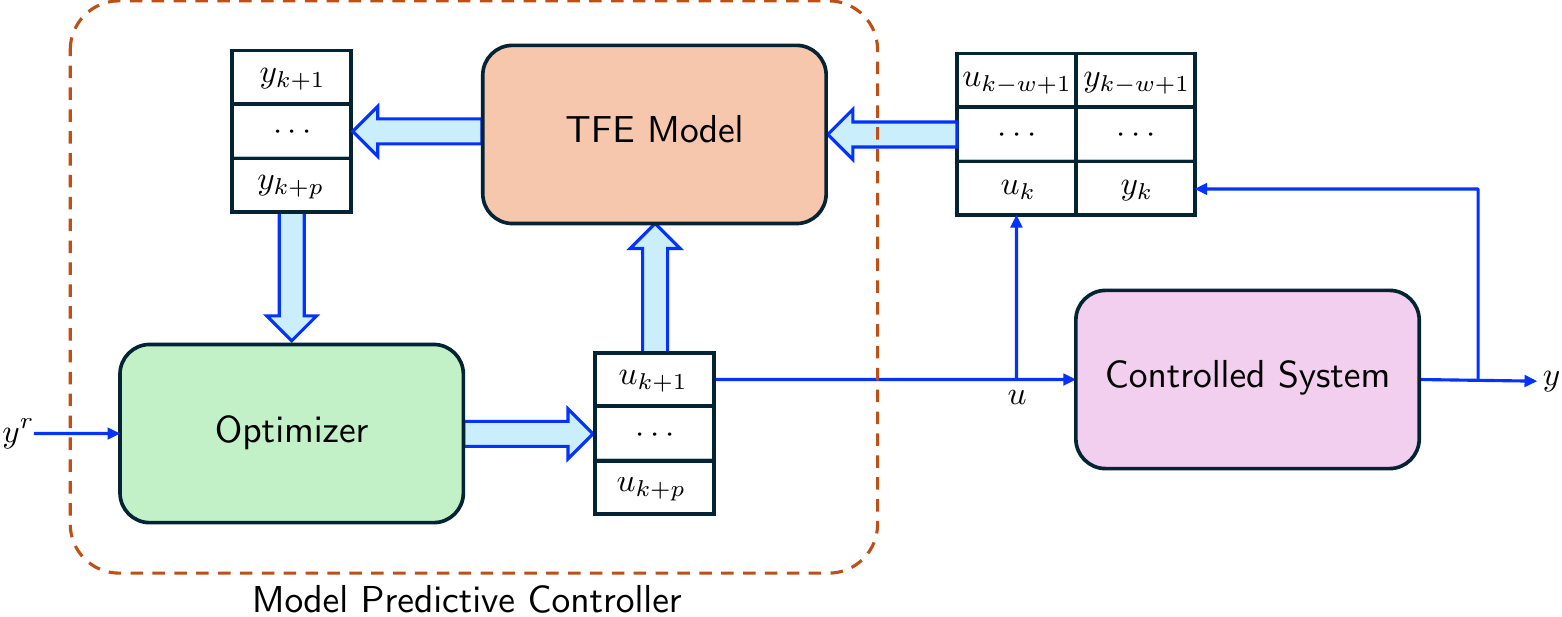}
      \caption{Overview of the MPC strategy with a transformer encoder (TFE)-based predictor.}
      \label{fig:DesignOverview}
   \end{figure}
   
This section briefly introduces the transformer encoder-based multi-step MPC strategy. 
Figure~\ref{fig:DesignOverview} illustrates the proposed closed-loop MPC scheme 
applied to a system with measured output $y_k$ and control input $u_k$ at time $k$. Here \( w \) represents the length of the past data sequence and \( p \) denotes the length of the future prediction window. The transformer encoder-based model uses the available past data (\( u_{k-w+1}, \ldots,u_{k}, y_{k-w+1}, \ldots,y_{k}  \)), along with anticipated future control inputs (\( u_{k+1}, \ldots,u_{k+p} \)) to predict future outputs (\( y_{k+1}, \ldots, y_{k+p} \)) of the controlled system. The vector of predicted outputs is passed to the optimizer, which determines the optimal control sequence (\( u_{k+1}, \ldots, u_{k+p} \)). The optimal control input $u_{k+1}$ is fed into the controlled system at time $k+1$, and, together with the measured system output \(y_{k+1}\), is fed back into learned model to update the model state for time \( k+1 \). 

\section{Transformer encoder-based Predictor}\label{sec:TFE}

\subsection{Data preparation and pre-processing} 
To illustrate our approach, we consider a nonlinear coupled $2$-tank system as a case study. While the system has two states, which correspond to the water level in each tank, our aim is to predict the water level of tank 2, reducing the problem to a single-input single-output system. Let \( x = [\begin{matrix} x_1 & x_2 \end{matrix}]^\top \) be the state vector, where \( x_1 \) and \( x_2 \) represent the water levels in tanks 1 and 2 respectively. The input (pump control signal) is denoted \( u\). The system with discrete time index $k$ and  parameters in Table~\ref{tab:twotank} 
is as follows:
\begin{equation}
\label{fun:siso}
  \begin{aligned}
    x[k+1] &= f(x[k], u[k]) + w[k]\\
    f_1(x, u) &= x_1 + \frac{\delta}{A} \left( k_p u - A_1 \sqrt{2 g x_1} \right) \\
    f_2(x, u) &= x_2 + \frac{\delta}{A} \left( A_1 \sqrt{2 g x_1} - A_2 \sqrt{2 g x_2} \right) 
  \end{aligned}
\end{equation}
%
The water level, \(x_1\), of tank 1 is an unmeasured state, and to simplify notation in the following development we use \(y_k\) to denote the level $x_2$ of tank 2 at time $k$. 
The structure of the input-output training data pairs is given by
\[
     X = \left[~\begin{matrix} 
     u_{k-w+1} & y_{k-w+1}
     \\ 
     \vdots & \vdots
     \\ 
     u_{k} & y_{k}
     \\
     \hline
     U & {\bf 0}_p\rule{0pt}{10pt}
     \end{matrix}~\right].
     \ \ 
     Y =\begin{bmatrix}
             y_{k+1}\\
             \vdots \\
             y_{k+p}
           \end{bmatrix}
 \]
with $U = [u_{k+1} \, \cdots \, u_{k+p}]^\top$ and ${\bf 0}_p = [0 \, \cdots \, 0]^\top\in\mathbb{R}^p$.

\begin{table}[h]
\caption{Two-tank model parameters}
\label{tab:twotank}
\begin{center}
\resizebox{\columnwidth}{!}{
\begin{tabular}{c c c c}
\hline
\textbf{Parameter} & \textbf{Symbol} & \textbf{Value} & \textbf{Unit}\\
\hline
Gravitational acceleration & \( g \) & 981 & \(cm \cdot s^{-2}\)\\
\hline
Time step & \(\delta\) & 1 & \(s\)\\
\hline
Pump gain & \(k_p\) & 3.3 & \(cm^3  s^{-1}  V^{-1}\) \\
\hline
Cross-sectional area & \(A\) & 15.2 & \(cm^2\)\\
\hline
Outflow orifice areas  & \(A_1, A_2\) & 0.135, 0.140 & \(cm^2\)\\
\hline
\end{tabular}}
\end{center}
\end{table}

\subsection{Structure of the prediction model}

\begin{figure}[thpb]
\centering  
\includegraphics[trim={101mm 14mm 59mm 30mm},clip,scale=0.43]{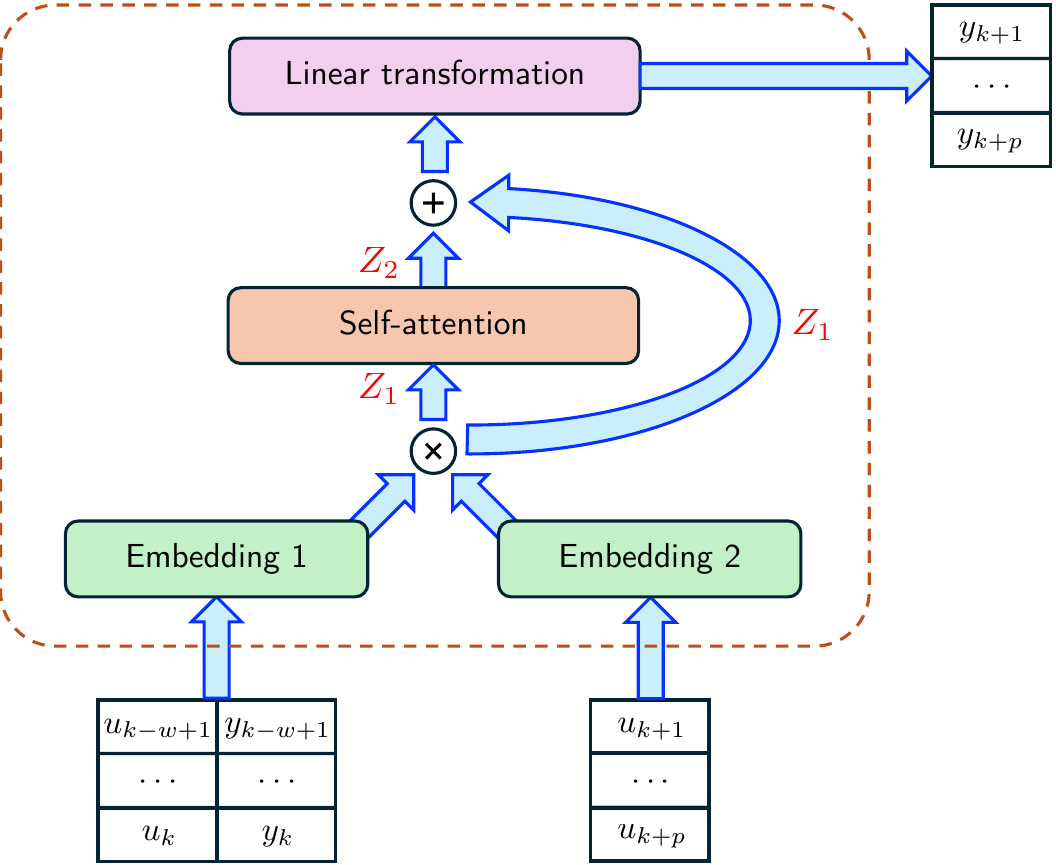} 
\caption{Structure of the transformer encoder-based model}
\label{fig:transformerencoderbasedModel}
\end{figure}

Figure~\ref{fig:transformerencoderbasedModel} illustrates in detail the TFE model appearing in Figure~\ref{fig:DesignOverview}. In the proposed structure, layer normalization calculations have been removed. Meanwhile, residual connections are retained to mitigate issues related to the lack of normalization by facilitating smoother gradient flow throughout the network \citep{he2016deep}. Self-attention is used in this preliminary study, with the understanding that our approach can be extended to multi-head attention architectures. Feature slicing is applied before inputting \(X\) into the  embedding layers. This method helps to split the data into past and future segments, allowing \textit{Embedding1} to focus on capturing the sequential relationships inherent in the past data. According to Fig.~\ref{fig:transformerencoderbasedModel}, the first computational stage of the model is defined as follows:
\begin{equation}
\label{fun:Z1}
    Z_1 = G(U) = \begin{bmatrix}
        X_pW_{EM1}^\top + \mathbf{1}b_{EM1}^\top \\
        U W_{EM2}^\top + \mathbf{1}b_{EM2}^\top
        \end{bmatrix} ,
\end{equation}
where \(U \in \mathbb{R}^{p}\) is the anticipated future input sequence and \(X_p\in \mathbb{R}^{w \times 2}\) contains the first $w$ rows of $X$, consisting only of past data. 
Here \(W_{EM1} \in \mathbb{R}^{d_k \times 2}\), \(W_{EM2} \in \mathbb{R}^{d_k \times 1}\) are weight matrices, \(b_{EM1} \in \mathbb{R}^{d_k}\) and \(b_{EM2} \in \mathbb{R}^{d_k}\) are bias vectors for embedding layers 1 and 2 respectively, and
\(d_k\) is a pre-set value representing the dimensionality of the keys in the attention mechanism. Taking \(Z_1 \in \mathbb{R}^{(w+p) \times d_k}\) as the input to the self-attention block, the attention score \(Z_2 \in \mathbb{R}^{(w+p) \times d_k}\) is defined as: 
\begin{equation}
\label{fun:Z2}
  \begin{aligned}
    Z_2 = \softmax(\frac{QK^\top}{\sqrt{d_k}})V,
  \end{aligned}
\end{equation}
and \(Q \in \mathbb{R}^{(w+p) \times d_k}\), \(K \in \mathbb{R}^{(w+p) \times d_k}\), \(V \in \mathbb{R}^{(w+p) \times d_k}\) are the queries, keys and values in the attention mechanism
\begin{equation}
\label{fun:QKV}
  \begin{aligned}
    Q &= Z_1W_Q^\top + \mathbf{1}b_Q^\top \\
    K &= Z_1W_K^\top + \mathbf{1}b_K^\top \\
    V &= Z_1W_V^\top + \mathbf{1}b_V^\top.
  \end{aligned}
\end{equation}
The weight matrices are denoted  \(W_{Q} \in \mathbb{R}^{d_k \times d_k}\), \(W_{K} \in \mathbb{R}^{d_k \times d_k}\), \(W_{V} \in \mathbb{R}^{d_k \times d_k}\), and $\mathbf{1} \in \mathbb{R}^{(w+p)}$ is a vector of ones. The bias vectors are represented as \(b_{Q} \in \mathbb{R}^{d_k}\), \(b_{K} \in \mathbb{R}^{d_k}\), \(b_{V} \in \mathbb{R}^{d_k}\). Finally, the output \(Y \in \mathbb{R}^{p}\) is given by 
\begin{equation}
\label{fun:Y}
  \begin{aligned}
    Y = W_{L2} \bigl((Z_1 + Z_2 )W_{L1} + b_{L1}\bigr) + b_{L2}
  \end{aligned}
\end{equation}
where \(W_{L1} \in \mathbb{R}^{d_k}\), \(W_{L2} \in \mathbb{R}^{p \times (w+p)}\) are the weight matrices of the linear transformation, and \(b_{L1} \in \mathbb{R}^{w+p}\) and \(b_{L2} \in \mathbb{R}^{p}\) are the associated bias vectors.

To evaluate the proposed model, we test various combinations of \(w\) and \(p\) with prediction accuracy \citep{MASTI2021109666} evaluated using the best fit ratio (BFR):
\begin{equation}
\label{fun:BFR}
  \begin{aligned}
    \mathit{BFR} = \max(0,1-\frac{\lVert Y-Y^{\mathrm{true}} \rVert_2}{\lVert Y^{\mathrm{true}} - \overline{Y} \rVert_2}) ,
  \end{aligned}
\end{equation}
where \(Y^{\mathrm{true}}\) is the system output with mean \(\overline{Y}\), and \(Y\) is the output predicted by the model. Based on the results in Table.~\ref{tab:BFR}, we choose $p=20$ because this provides the best tradeoff between capturing the delayed dynamics of the plant, while maintaining the accuracy of the prediction and keeping the optimization computationally practical.

\begin{table}[t]
\caption{Open-loop prediction performance}
\label{tab:BFR}
\begin{center}
\begin{tabular}{c c c c c}
\hline
\textbf{$w$} & \textbf{$p$} & \textbf{Training BFR} & \textbf{Validation BFR}\\
\hline
8 & 10 & 98.42\(\%\) & 94.93\(\%\)\\
\hline
8 & 20 & 98.67\(\%\) & 95.74\(\%\)\\
\hline
8 & 30 & 97.52\(\%\) & 92.40\(\%\)\\
\hline
8 & 40 & 97.43\(\%\) & 92.23\(\%\) \\
\hline
\end{tabular}
\end{center}
\end{table}

\section{Convex-concave procedure}\label{sec:CCP}

We propose the application of DC programming to both the attention mechanism and the fully connected layer. By employing the DC framework, we can decompose the nonconvex problem into more manageable convex subproblems. The attention mechanism is divided into three stages, taking into account the multiplication of the query \(Q\) and key \(K\), the use of the softmax function, and the subsequent multiplication of the softmax output with the value \(V\). The complete decomposition is as follows.

\subsection{Self-attention stage 1}\label{sec:stage1}
We first construct convex functions \( P_1(Z_1)\) and \( P_2(Z_1)\) such that \(QK^\top = P_1(Z_1) - P_2(Z_1)\).
Let $P(Z_1) = Q K^\top$, then from (\ref{fun:QKV}) we obtain
\begin{align}
P(Z_1) 
&= 
Q(Z_1) K(Z_1)^\top \notag\\
&\quad = 
(Z_1W_Q^\top + \mathbf{1} b_Q^\top)(Z_1W_K^\top + \mathbf{1} b_K^\top )^\top 
\label{fun:PZ1} 
\end{align}
Denoting the $i$th and $j$th rows of $Q$ and $K$ by $Q_i\in\mathbb{R}^{d_k}$ and $K_j\in\mathbb{R}^{d_k}$ respectively, the $(i,j)$th element of $P$ is therefore
\begin{equation}
P_{ij}(Z_1) = Q_i(Z_1) ^\top K_j(Z_1)
\quad i,j = 1,\dots,w+p.
\label{eq:P-entry-inner-product}
\end{equation}
In general, this a nonconvex quadratic function,
and to obtain a DC representation we use the following identity, which expresses $x^\top y$ as a difference of convex quadratic functions with minimum curvature\footnote{In \eqref{fun:bilinear-identity}, $x^\top y = p_1(x,y) - p_2(x,y)$ where $p_1,p_2$ are the convex quadratic functions that have Hessian matrices with the smallest possible maximum eigenvalue.}
for $x,y \in \mathbb{R}^{d_k}$:
\begin{equation}
  x^\top y
  \;=\;
  \tfrac{1}{4}\,\bigl\|x + y\bigr\|_2^2
  \;-\;
  \tfrac{1}{4}\,\bigl\|x - y\bigr\|_2^2 .
  \label{fun:bilinear-identity}
\end{equation}
Applying \eqref{fun:bilinear-identity} with $x = Q_i(Z_1)$ and $y = K_j(Z_1)$ yields
\begin{equation}
P_{ij}(Z_1)  = [P_1(Z_1)]_{ij} - [P_2(Z_1)]_{ij},
\end{equation}
where $P_1$ and $P_2$ are convex functions
\begin{align*}
[P_1(Z_1)]_{ij} &= \tfrac{1}{4}\|Q_i(Z_1) + K_j(Z_1)\|_2^2
\\
[P_2(Z_1)]_{ij} &= \tfrac{1}{4}\|Q_i(Z_1) - K_j(Z_1)\|_2^2.
\end{align*}
This decomposition enables the CCP to be applied to optimization problems involving $P(Z_1)$. In particular, linearizing \(P_1(Z_1)\) or \(P_2(Z_1)\) at \(Z_1 = Z_1^k\), we obtain a concave lower bound and convex upper bound on \(P(Z_1)\) respectively:
\begin{align}
&\underline{P}_{ij}(Z_1) = \tfrac{1}{4}\bigl\|Q_i(Z_1^k) + K_j(Z_1^k)\bigr\|_2^2  - \tfrac{1}{4}\bigl\|Q_i(Z_1) - K_j(Z_1)\bigr\|_2^2 
\notag\\ 
&\quad + \tfrac{1}{2}(Q_i(Z_1^k) + K_j(Z_1^k))^\top [W_Q (Z_1-Z_1^k)^\top]_i
\notag\\ 
&\quad + \tfrac{1}{2}(Q_i(Z_1^k) + K_j(Z_1^k))^\top [W_K (Z_1-Z_1^k)^\top]_j
\leq P _{ij} (Z_1)
\label{fun:Pboundslow}
\\
&\overline{P}_{ij}(Z_1) = \tfrac{1}{4}\bigl\|Q_i(Z_1) + K_j(Z_1)\bigr\|_2^2  - \tfrac{1}{4}\bigl\|Q_i(Z_1^k) - K_j(Z_1^k)\bigr\|_2^2 
\notag\\ 
&\quad - \tfrac{1}{2}(Q_i(Z_1^k) - K_j(Z_1^k))^\top [W_Q (Z_1-Z_1^k)^\top]_i 
\notag\\
&\quad + \tfrac{1}{2}(Q_i(Z_1^k) - K_j(Z_1^k)) ^\top [W_K (Z_1-Z_1^k)^\top]_j
\geq P_{ij} (Z_1)
\label{fun:Pboundsup}
\end{align}
where $[W_Q (Z_1 - Z_1^k)^\top]_i$ and $[W_K (Z_1 - Z_1^k)^\top]_j$ denote the $i$th and $j$th columns of the respective matrix products. 
%


\subsection{Self-attention stage 2}\label{sec:stage2}
We now consider convex bounds on the softmax function applied to \(P \in \smash{\mathbb{R}^{(w+p) \times (w+p)}}\). Let  
$R = \softmax(P/\sqrt{d_k})$, where $R\in \smash{\mathbb{R}^{(w+p) \times (w+p)}} $, and let $R_n$ and $P_n$ denote the $n$th rows of $R$ and $P/\sqrt{d_k}$ with $i$th components $R_{ni}$ and $P_{ni}$, respectively. 
Then the softmax function defines 
$R_{ni}$ as
\begin{equation}
\label{fun:RPi}
    R_{ni}(P_{n}) = \frac{\exp(P_{ni})}{\sum_{j=1}^{w+p} \exp(P_{n,j})} ,
\qquad i = 1,\ldots,w+p. 
\end{equation}
Consider the log-softmax function given by
\[
 \ln (R_{ni}(P_{n})) = P_{ni}-\ln(\sum_{j=1}^{w+p} \exp(P_{n,j})) .
\]
The second term on the RHS is the log-sum-exp (LSE) function:
$\LSE(P_{n}) = \ln(\sum_{j=1}^{w+p} \exp(P_{n,j}))$, which is convex in $P_n$.
%
Let \(P(Z_1^k) = P^k\), where \(Z_1^k\) is the linearization point used in (\ref{fun:Pboundsup})-(\ref{fun:Pboundslow}). Then, since $\LSE()$ is convex, the Jacobian of $\LSE(P_n)$ at $P^k$ yields a linear upper bound:
\[
\ln (R_{ni}(P_{n})) \!\leq\! P_{ni}-\LSE(P_{n}^k) 
-\Bigl[\frac{\exp(P_{n}^k)}{\sum_{j =1}^{w+p} \!\exp(P_{n,j}^k)}\Bigr]^{\!\top}\!\!\!(P_{n}-P_{n}^k).
\]
The monotonicity of $\exp()$ implies 
$R_{ni}(P_n)\leq \overline{R}_{ni}(P_{n})$ for all $P_n\in \mathbb{R}^{w+p}$
where 
\begin{align}
\overline{R}_{ni}(P_{n}) &= 
\exp \Bigl\{ P_{ni} - \LSE(P_{n}^k) 
\nonumber \\
&\quad -\Bigl[\frac{\exp(P_{n}^k)}{\sum_{j =1}^{w+p} \exp(P_{n,j}^k)}\Bigr]^{\top}(P_{n}-P_{n}^k)\Bigr\},
\label{eq:softmax_upperbnd}
\end{align}
and furthermore $\overline{R}_{ni}()$ is convex.

To obtain a concave lower bound on $R = \softmax(P)$ we use the following result.

\begin{lem}\label{lem:softmax_symmetry}
For all $P_n\in \mathbb{R}^{w+p}$, $R_{ni}(P_n) \geq \underline{R}_{ni}(P_n)$ where
\begin{align}
\underline{R}_{ni}(P_n) &= 1 - \sum_{j\neq i}^{w+p} \exp(P_{nj}) \exp\Bigl\{-\LSE(P_n^k) 
\nonumber \\
&\quad - 
\Bigl[\frac{\exp(P_{n}^k)}{\sum_{j =1}^{w+p} \!\exp(P_{nj}^k)}\Bigr]^{\!\top}\!\!\!(P_{n}-P_{n}^k)
\Bigr\},
\label{eq:softmax_lowerbnd}
\end{align}
and furthermore $\underline{R}_{ni}()$ is concave.
\end{lem}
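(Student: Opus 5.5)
The plan is to combine the normalisation identity of the softmax with the upper bound \eqref{eq:softmax_upperbnd} already obtained. First, since $\sum_{j=1}^{w+p} R_{nj}(P_n) = 1$, we have the exact identity
\[
R_{ni}(P_n) = 1 - \sum_{j\neq i} R_{nj}(P_n).
\]
So any upper bound on each $R_{nj}$, $j\neq i$, immediately gives a lower bound on $R_{ni}$.

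Next, apply \eqref{eq:softmax_upperbnd} with index $j$ in place of $i$. The linearisation of $\LSE$ at $P_n^k$ does not depend on the component index, so the bound becomes
\[
R_{nj}(P_n)\le \exp(P_{nj})\exp\Bigl\{-\LSE(P_n^k) - \bigl[\nabla \LSE(P_n^k)\bigr]^\top (P_n-P_n^k)\Bigr\}.
\]
Here $\nabla\LSE(P_n^k)$ is exactly the softmax vector $\exp(P_n^k)/\sum_j \exp(P_{nj}^k)$ appearing in \eqref{eq:softmax_lowerbnd}. This step is simply the factorisation $\exp(a+b)=\exp(a)\exp(b)$ applied to the exponent in \eqref{eq:softmax_upperbnd}. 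The bound itself rests on two facts: convexity of $\LSE$, which gives $\LSE(P_n)\ge \LSE(P_n^k)+\nabla\LSE(P_n^k)^\top(P_n-P_n^k)$, and monotonicity of $\exp$, both already used in deriving \eqref{eq:softmax_upperbnd}. Summing over $j\neq i$ and substituting into the identity yields $R_{ni}(P_n)\ge \underline{R}_{ni}(P_n)$ for all $P_n$.

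For concavity, observe that each summand $\exp(P_{nj})\exp\{\cdots\}$ equals $\exp(\ell_j(P_n))$, where
\[
\ell_j(P_n) = P_{nj} - \LSE(P_n^k) - \nabla\LSE(P_n^k)^\top (P_n - P_n^k)
\]
is affine in $P_n$. The composition of the convex, nondecreasing function $\exp$ with an affine map is convex. A finite sum of convex functions is convex, so $1$ minus that sum is concave, which proves that $\underline{R}_{ni}$ is concave.

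There is no real obstacle here. The only point needing care is to check that the linearised-$\LSE$ factor is common to all $j$ and is evaluated at the same $P_n^k$, so that each term is genuinely an exponential of an affine function. This is immediate from the form of \eqref{eq:softmax_upperbnd}.
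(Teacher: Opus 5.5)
Your proof is correct and is essentially the paper's argument: both rest on the normalisation identity $R_{ni} = 1 - \sum_{j\neq i} R_{nj}$ together with the tangent-plane lower bound on $\LSE$ at $P_n^k$. The only difference is packaging. The paper applies that bound once to the aggregated term $\exp[\ln\sum_{j\neq i}\exp(P_{nj}) - \LSE(P_n)]$, whereas you apply the already-established bound \eqref{eq:softmax_upperbnd} to each term and sum. Your concavity argument, that each summand is the exponential of an affine function, is slightly cleaner than the paper's reasoning via the convexity of the bracketed exponent.
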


\begin{pf}
To derive \eqref{eq:softmax_lowerbnd} we rewrite (\ref{fun:RPi}) as
\begin{align*}
R_{ni}(P_{n}) 
&= 1 - \frac{\sum_{j\neq i}^{w+p} \exp(P_{nj})}{\sum_{j=1}^{w+p} \exp(P_{nj})} 
\\
&= 1 - \exp\Bigl[ \ln \sum_{j\neq i}^{w+p} \exp(P_{nj}) - \LSE(P_{n}) \Bigr],
\end{align*}
then replace $\LSE(P_{n})$ on the RHS with its Jacobian linear approximation around $P^k$. Since term in square brackets on the RHS is a difference of  convex functions, the resulting function is a lower bound on $R_{ni}(P_{n})$ and is concave in $P_n$ due to the monotonicity of $\exp()$.
\qed\end{pf}



We can now state convex conditions imposing upper and lower bounds on the softmax function  \eqref{fun:RPi}.

\begin{thm}
For any $P^k$, the constraints
\begin{equation}\label{eq:softmax_bnds}
\underline{r}_{ni} \leq \underline{R}_{ni}(P_n), 
\quad
\overline{R}_{ni}(P_n) \leq \overline{r}_{ni},
\end{equation}
are convex in $P_n$ and sufficient for $\underline{r}_{ni} \leq R_{ni}(P_n) \leq \overline{r}_{ni}$, and necessary for $\underline{r}_{ni} \leq R_{ni}(P_n^k) \leq \overline{r}_{ni}$.
\end{thm}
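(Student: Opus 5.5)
The theorem makes three claims about the constraints \eqref{eq:softmax_bnds}: they are convex, they are sufficient, and they are necessary at the linearization point. Each follows directly from the construction of $\overline{R}_{ni}$ in \eqref{eq:softmax_upperbnd} and $\underline{R}_{ni}$ in Lemma~\ref{lem:softmax_symmetry}, so I will treat them in turn.

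\emph{Convexity.} The set $\{P_n : \underline{r}_{ni} \le \underline{R}_{ni}(P_n)\}$ is a superlevel set of a concave function, so it is convex. Concavity of $\underline{R}_{ni}$ is given by Lemma~\ref{lem:softmax_symmetry}. Directly, each summand $\exp(P_{nj} - \LSE(P_n^k) - g^\top(P_n - P_n^k))$, with $g = \softmax(P_n^k)$, is the exponential of an affine function. It is therefore convex, so $1$ minus their sum is concave. Similarly, $\{P_n : \overline{R}_{ni}(P_n) \le \overline{r}_{ni}\}$ is a sublevel set of $\overline{R}_{ni}$. That function is the exponential of an affine function of $P_n$, hence convex, so this set is also convex. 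The intersection of the two sets is convex.

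\emph{Sufficiency.} This uses the global bounds already established. Monotonicity of $\exp$ together with the tangent-plane inequality $\LSE(P_n) \ge \LSE(P_n^k) + g^\top(P_n - P_n^k)$ gives $R_{ni}(P_n) \le \overline{R}_{ni}(P_n)$ for all $P_n$. Lemma~\ref{lem:softmax_symmetry} gives $R_{ni}(P_n) \ge \underline{R}_{ni}(P_n)$ for all $P_n$. Chaining these inequalities, the constraints \eqref{eq:softmax_bnds} imply $\underline{r}_{ni} \le \underline{R}_{ni}(P_n) \le R_{ni}(P_n) \le \overline{R}_{ni}(P_n) \le \overline{r}_{ni}$.

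\emph{Necessity at $P_n^k$.} Here the point is that both bounds are tight at $P_n = P_n^k$. Substituting $P_n = P_n^k$ makes the linear term vanish, so $\overline{R}_{ni}(P_n^k) = \exp(P^k_{ni} - \LSE(P_n^k)) = R_{ni}(P_n^k)$. Likewise $\underline{R}_{ni}(P_n^k) = 1 - \sum_{j\ne i}\exp(P^k_{nj})/\sum_j \exp(P^k_{nj}) = R_{ni}(P_n^k)$. Hence if $\underline{r}_{ni} \le R_{ni}(P_n^k) \le \overline{r}_{ni}$, then \eqref{eq:softmax_bnds} holds at $P_n = P_n^k$. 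So the convexified constraints never exclude the current linearization point whenever the original constraint holds there; this is the meaning of ``necessary'' in the statement. The only step needing care is this tightness check, which is a routine substitution; everything else is immediate from the earlier bounds.
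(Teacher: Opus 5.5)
Your proposal is correct and follows essentially the same route as the paper. Convexity comes from the convexity of $\overline{R}_{ni}$ and the concavity of $\underline{R}_{ni}$, sufficiency from the global sandwich $\underline{R}_{ni}(P_n)\le R_{ni}(P_n)\le \overline{R}_{ni}(P_n)$, and necessity from the tightness $\underline{R}_{ni}(P_n^k)=R_{ni}(P_n^k)=\overline{R}_{ni}(P_n^k)$ of the Jacobian linearization; you simply spell out the exponential-of-affine structure and the substitution at $P_n^k$ that the paper states in one line each.
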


\begin{pf}
Sufficiency of \eqref{eq:softmax_bnds} for $\underline{r}_{ni} \leq R_{ni}(P_n) \leq \overline{r}_{ni}$ 
follows from $\underline{R}_{ni}(P_{n}) \leq R_{ni}(P_n)\leq \overline{R}_{ni}(P_{n})$ for all $P_n\in \mathbb{R}^{w+p}$,
while convexity of \eqref{eq:softmax_bnds} 
is due to the convexity of $\overline{R}_{ni}()$ and the concavity of $\underline{R}_{ni}()$.
Necessity of \eqref{eq:softmax_bnds} for $\underline{r}_{ni} \leq R_{ni}(P_n^k) \leq \overline{r}_{ni}$ is a consequence of the fact that Jacobian linearization provides a tight bound on a convex function, and hence $\underline{R}_{ni}(P_n^k)=R_{ni}(P_n^k)=\overline{R}_{ni}(P_n^k)$.
\qed\end{pf}

\subsection{Self-attention stage 3}\label{sec:stage3}

The output of the self-attention block is $Z_2 =RV$ where $R=\softmax(P/\sqrt{d_k})$ and $V= Z_1 W_V^\top + \mathbf{1}b_V^\top$ from (\ref{fun:Z2}) and (\ref{fun:QKV}). We denote $Z_2 = S(R,Z_1)$ and express the DC decomposition of the $(i,j)$th element of $S(R,Z_1)$ as
\begin{align*}
S_{ij}(R,Z_1) &= 
\tfrac{1}{4} \| R_i + [Z_1V_K^\top + \mathbf{1} b_V^\top ]_j \|^2_2
\\
&\quad - \tfrac{1}{4} \| R_i - [Z_1W_V^\top +\mathbf{1} b_V^\top ]_j \|^2_2
\end{align*}
where $R_i \in\mathbb{R}^{w+p}$ and $[Z_1W_V^\top +\mathbf{1} b_V^\top ]_j \in\mathbb{R}^{w+p}$ are the $i$th row and $j$th column of $R$ and $Z_1W_V^\top +\mathbf{1} b_V^\top$ respectively.
By Jacobian linearization around \(R^k, Z_1^k\), we obtain the following quadratic bounds on \(S_{ij}(R,Z_1)\):
\begin{align}
    & \underline{S}_{ij}(R,Z_1) = \tfrac{1}{4} \| R_i^k + [Z_1^kW_V^\top + \mathbf{1} b_V^\top ]_j \|^2_2 
\notag\\ 
    &\quad -\tfrac{1}{4} \|R_i - [Z_1W_V^\top + \mathbf{1} b_V^\top]_j \|^2_2 
\notag\\ 
    &\quad + \tfrac{1}{2}(R_i^k + [Z_1^kW_V^\top + \mathbf{1} b_V^\top]_j)^\top (R_i-R_i^k) 
\notag\\
    &\quad + \tfrac{1}{2}(R_i^k + [Z_1^kW_V^\top + \mathbf{1} b_V^\top]_j)^\top [ (Z_1-Z_1^k) W_V^\top]_j
\label{fun:Sboundlow}
\\
    &\overline{S}_{ij}(R,Z_1) = \tfrac{1}{4} \| R_i + [Z_1W_V^\top + \mathbf{1} b_V^\top]_j \|^2_2 
\notag\\ 
    &\quad - \tfrac{1}{4} \| R^k_i - [Z_1^kW_V^\top + \mathbf{1} b_V^\top ]_j \|^2_2 
\notag\\ 
    &\quad - \tfrac{1}{2}  (R_i^k - [Z_1^kW_V^\top + \mathbf{1} b_V^\top]_j  )^\top
 (R_i- R_i^k)
\notag\\
    &\quad + \tfrac{1}{2}( R_i^k - [Z_1^kW_V^\top + \mathbf{1} b_V^\top]_j )^\top  [(Z_1-Z_1^k) W_V^\top]_j
\label{fun:Sboundup}
\end{align}
Thus $\underline{S}(R,Z_1) \le S(R,Z_1) \le \overline{S}(R,Z_1)$, and furthermore $\underline{S}(R,Z_1)$ and $\overline{S}(R,Z_1)$ are respectively concave and convex functions of $(R,Z_1)$, where $\underline{S}$ and $\overline{S}$ are matrices with $(i,j)$th elements $\underline{S}_{ij}$ and $\overline{S}_{ij}$, respectively.
These functions provide bounds on the intermediate variable $Z_2$ in (\ref{fun:Z2}), and hence provide bounds on the output $Y$ in
(\ref{fun:Y}). 

\subsection{CCP algorithm}

Bounds on the tracking objective are
computed over the set of possible outputs $Y$ consistent with the elementwise bounds constructed for the intermediate variables $(P,R,S)$ in self-attention stages 1-3.
However, enumerating the extreme points corresponding to these elementwise bounds is impractical (unless $d_k$, $w$ and $p$ are very small), so we instead bound the search space within a simplex and minimize the worst-case tracking error over this set.

At each iteration of the CCP algorithm, the model components are convexified using the solution estimate generated by the previous iteration, producing a sequence of subproblems that can be solved using a convex solver such as MOSEK \citep{mosek}. Algorithm~\ref{alg:MCCPA} summarizes the optimization problem. The constraints arise from the need to propagate uncertainty through the sequence of internal representations
$(P, R, S)$ within the TFE model. 

Each stage of this model involves a nonconvex mapping, and therefore the CCP bounds the output of each stage using a
convex set computed using convex conditions. 
%
Let $\mathcal{D}(0,1)$ denote the standard simplex in $\mathbb{R}^n$:
\[
  \mathcal{D}(0,1)
  =
  \mathrm{co} \{ 0, e_1, \dots, e_n \},
\]
where $e_i$ is the $i$th canonical basis vector and $\mathrm{co}\{\}$ denotes the convex hull.
Then for any $(\alpha,\beta)\in \mathbb{R}^{n+1}$ such that $\beta + {\bf 1}^\top\alpha \geq 0$, the simplex $\mathcal{D}(0,1)$ after scaling by $\beta+{\bf 1}^\top \alpha$ and translation through $-\alpha$, is given by
\[
\mathcal{D}(\alpha,\beta) = \{ x\in \mathbb{R}^n : -\alpha \leq x, \ {\bf 1}^\top x \leq \beta\} .
\]
The admissible internal variables are thus subject to the constraints
$P\in\mathcal{C}(\alpha_1,\beta_1)$,
$R\in\mathcal{C}(\alpha_2,\beta_2)$, 
$S\in\mathcal{C}(\alpha_3,\beta_3)$,
where $\alpha_1,\alpha_2,\alpha_3$ and $\beta_1,\beta_2,\beta_3$ are optimization variables
and where $X\in \mathcal{C}(\alpha,\beta)$ if and only if $x\in \mathcal{D}(\alpha,\beta)$, where $x$ is the vectorization of the matrix $X$.
%
The output $Y$ is computed
through the affine transformation:
\[
  Y = W_{L2} \bigl((Z_1 + S)W_{L1}^\top + b_{L1}\bigr) + b_{L2} .
\]
The iteration in Algorithm~1 (lines 4-9) solves a convex optimization problem using the convex bounds derived in \eqref{fun:Pboundslow}-\eqref{fun:Pboundsup}, \eqref{eq:softmax_upperbnd}-\eqref{eq:softmax_lowerbnd}, and \eqref{fun:Sboundup}-\eqref{fun:Sboundlow}. The linearization points employed in these bounds are defined by the solution of the previous iteration. As we show in Section~\ref{sec:converge}, this ensures convergence of the iteration to a local optimum.

\begin{algorithm}[t]
\caption{CCP Algorithm TFE-based MPC}\label{alg:MCCPA}
\begin{algorithmic}[1]
\State \textbf{Input:} Initial guess $U^{0}$, tolerance $\varepsilon > 0$, maximum iterations $K_{\max}$.
\State \textbf{Variables:} 
$U,\alpha_1,\alpha_2,\alpha_3,\beta_1,\beta_2,\beta_3$.
\State Set $k \gets 0$.
\Repeat
    \State \textbf{Linearization point:} Evaluate the model at $U^k$ and set $Z_1^{k}\gets G(U^k)$, 
           $P^k\gets P(Z_1^{k})$, $R^{k} \gets R(P^{k})$, and $S^k\gets S(R^{k}, Z_1^{k})$.
    \State \textbf{DC convexification:} 
           Construct convex bounds
           $\underline{P}(Z_1)$, $\overline{P}(Z_1)$, 
           $\underline{R}(P)$, $\overline{R}(P)$, and
           $\underline{S}(R,Z_1)$, $\overline{S}(R,Z_1)$
           using
           \eqref{fun:Pboundslow}-\eqref{fun:Pboundsup}, \eqref{eq:softmax_upperbnd}-\eqref{eq:softmax_lowerbnd}, \eqref{fun:Sboundup}-\eqref{fun:Sboundlow}
           with $Z_1^{k},P^{k}, R^{k}, S^k$.
    \State \textbf{Convex subproblem:}
           Solve
    \begin{align*}
      %
      & J^{k} = \min_{\substack{U,\alpha_1,\alpha_2,\alpha_3\\\beta_1,\beta_2,\beta_3}} \  \max_{S\in \mathcal{C}(\alpha_3, \beta_3)}
      \|Y - Y^r\|^2 + \lambda \|\Delta U\|^2
      \\
      & \text{s.t.} \ 
      Y = W_{L2}\bigl((Z_1 + S)W_{L1}^\top + b_{L1}\bigr) + b_{L2}^\top
      \\
      &\hspace{1ex}
      \begin{alignedat}{2}
      &{-\alpha_1} \leq \underline{P}(Z_1), &
      & \sum_{i,j} \overline{P}(Z_1)_{ij} \le \beta_1
      \\
      &{-\alpha_2} \le \underline{R}(P), &
      &\sum_{i,j} \overline{R}(P)_{ij} \le \beta_2, 
      \,\forall P \in \mathcal{C}(\alpha_1, \beta_1)
      \\
      &{-\alpha_3} \le \underline{S}(R,Z_1), \ & 
      &\smash[l]{\sum_{i,j} \overline{S}(R,Z_1)_{ij} \le \beta_3},
      \,\forall R \in \mathcal{C}(\alpha_2, \beta_2)
      \\
      &Z_1 = G(U), &
      &\Delta U_{j+1} = \begin{cases} 
      U_{0} - \smash{U_{0}^{\prime}}, & j =0
      \\
      U_{j} - U_{j-1}, & j>0
      \end{cases}
      \end{alignedat}
    \end{align*}
    where $\Delta U_{j}$ is the $j$th element of $\Delta U$
    and $U_{0}^{\prime}$ is the first element of the optimal solution at the previous time step. Denote the optimal $(U,Y)$ as $(\smash{U^{k+1}},\smash{Y^{k+1}})$.
    \State \textbf{Update iteration counter:} $k \gets k + 1$.
\Until{
  $\|U^{k} - U^{k-1}\| \le \varepsilon$ or $k \ge K_{\max}$}.
\State \textbf{Output:} $U^\star \gets U^{k}$ and $Y^\star\gets Y^k$.
\end{algorithmic}
\end{algorithm}

\section{Convergence analysis}\label{sec:converge}
%
%
The convex subproblem in Algorithm~\ref{alg:MCCPA} (line~7) minimizes a quadratic cost penalizing an upper bound on the predicted tracking error relative to the output reference $Y^r$. This bound is computed using the sets $\mathcal{C}(\alpha_i,\beta_i)$, $i=1,2,3$ containing all admissible realizations of the internal variables of the TFE model. 
The constraints of the problem are imposed as convex constraints on the vertices of these simplexes.
Since the bounds derived in Section~\ref{sec:CCP} are tight, the choice of linearization points esure that, at each iteration a feasible (but suboptimal) solution is necessarily given by
$Z_1 = Z_1^k$ and $\alpha_i,\beta_i$, $i=1,2,3$ corresponding to
\[
\mathcal{C}(\alpha_1,\beta_1) = \{P^k\}, \
\mathcal{C}(\alpha_2,\beta_2) = \{R^k\}, \
\mathcal{C}(\alpha_3,\beta_3) = \{S^k\},
\]
and the objective value therefore satisfies, for all $k$,
\[
  J^{k+1} \leq J^{k}.
\]
A second key element of the method is that the constraints linking~$P$, $R$, and $S$ across stages form a consistent sequence of uncertainty sets. Moreover, because each convex subproblem is feasible by construction and solved exactly using a convex optimizer, as $k\to\infty$ we necessarily have
\[
  Z_1^{k+1} - Z_1^{k} \to 0,
  \qquad
  J^{k+1} - J^{k} \to 0,
\]
ensuring that, for each $i=1,2,3$, $\mathcal{C}(\alpha_i,\beta_i)$ converges to a singleton 
and the iteration of Algorithm~1 converges to a locally optimal solution of the tracking control problem \citep[see e.g.][Theorems 2 and 3]{ju_1}.

\section{Experimental results} \label{sec:results}

\begin{figure}[b]
\centering  
\includegraphics[trim={2mm 2mm 2mm 2mm},clip,scale=0.5]{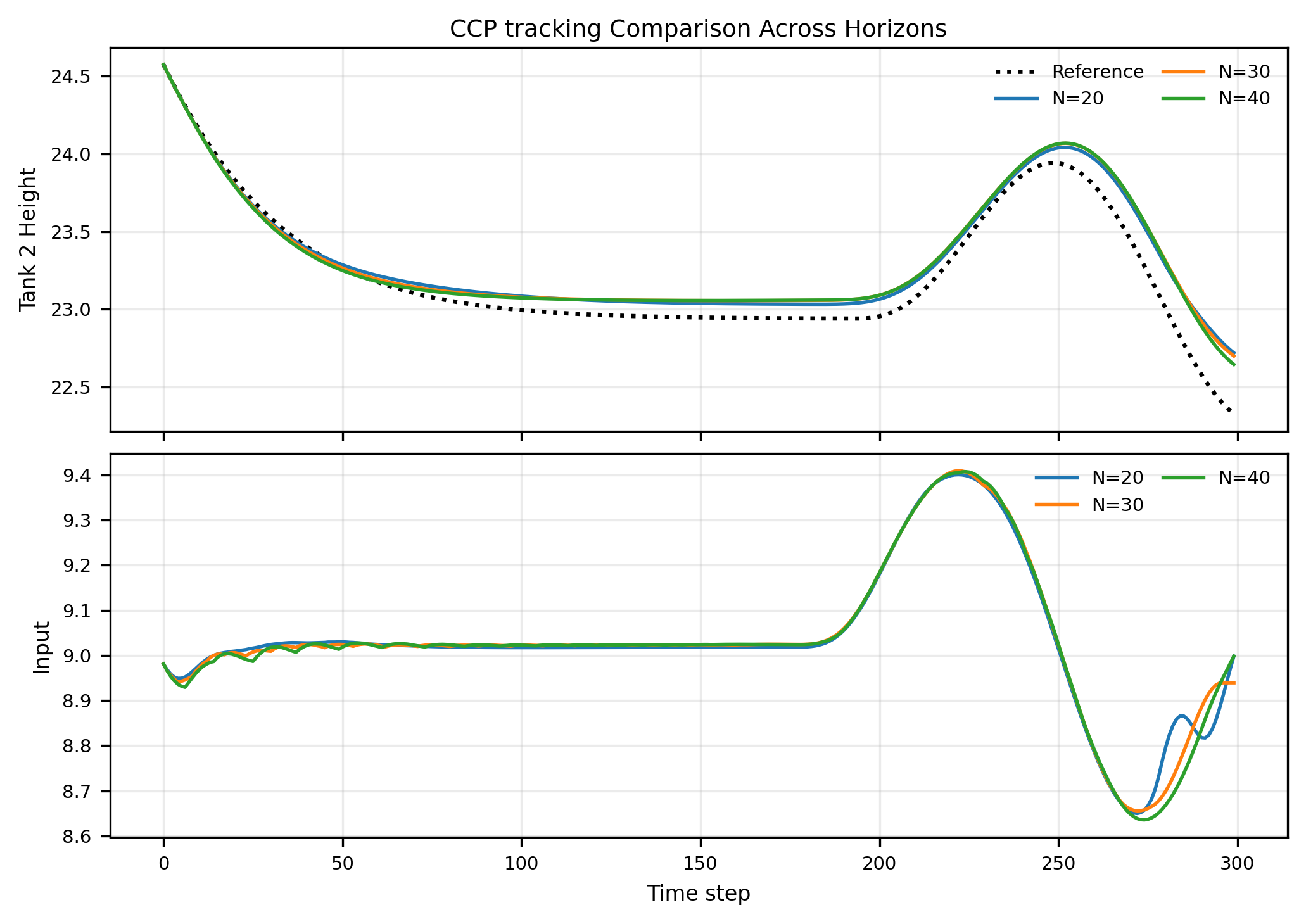} 
\caption{Closed-loop tracking performance of CCP-MPC under  different prediction horizons ($p=20$).}
\label{fig:CCP_MPC horizons}
\end{figure}
The closed-loop responses of CCP-based MPC are compared in Figure \ref{fig:CCP_MPC horizons} for prediction horizons $N=20$, $30$, and $40$, with control weighting $\lambda = 8$ in the MPC cost. 
Here $p=20$ is fixed, and for $N>p$ we therefore extend the lengths of vectors $Y$, $U$ to $N$ by recursively applying the TFE predictor with $p=20$.
The dominant dynamics are captured with $N=20$, and increasing the horizon to $30$ or $40$ does not produce significant improvements in closed-loop tracking performance, as shown in Table~\ref{tab:ccp_slsqp_timing_error}.

\begin{table}[b]
\centering
\caption{Mean ($t_{\mathrm{av}}$) and maximum ($t_{\max}$) solve times, and  closed-loop MPC tracking error (RMSE) for CCP and SLSQP with varying $N$.}
\label{tab:ccp_slsqp_timing_error}
\begin{tabular}{c|ccc|ccc}
\hline
{$N$} & \multicolumn{3}{c|}{CCP\rule{0pt}{8pt}} & \multicolumn{3}{c}{SLSQP} \\
 & $t_{\mathrm{av}}$\,(s) & $t_{\max}$\,(s) & RMSE & $t_{\mathrm{av}}$\,(s) & $t_{\max}$\,(s) & RMSE \\
\hline
20 & 0.075 & 0.203 & 0.130 & 0.046 & 0.118 & 0.355\rule{0pt}{8pt} \\
30 & 1.857 & 3.229 & 0.138 & 0.107 & 0.393 & 0.355 \\
40 & 5.166 & 9.421 & 0.133 & 0.100 & 0.257 & 0.355 \\
\hline
\end{tabular}
\end{table}
 With increasing prediction horizon $N$, the computational cost of CCP grows significantly, as can be seen from the mean and maximum computation times $t_{\mathrm{av}}$ and $t_{\max}$ in Table~\ref{tab:ccp_slsqp_timing_error}. In contrast, SLSQP maintains relatively low average solve times across all tested horizons. However, the tracking accuracy shows a clear advantage for CCP. The RMSE values obtained by CCP remain low and nearly constant across all horizons, ranging from 0.130 to 0.138. This indicates that although SLSQP requires less computation time, it produces less accurate tracking performance in this experiment. Therefore, CCP is preferred for its better tracking accuracy and more reliable solution quality.

\begin{figure}[t]
\centering  
\includegraphics[trim={3mm 3mm 1mm 2mm},clip,scale=0.5]{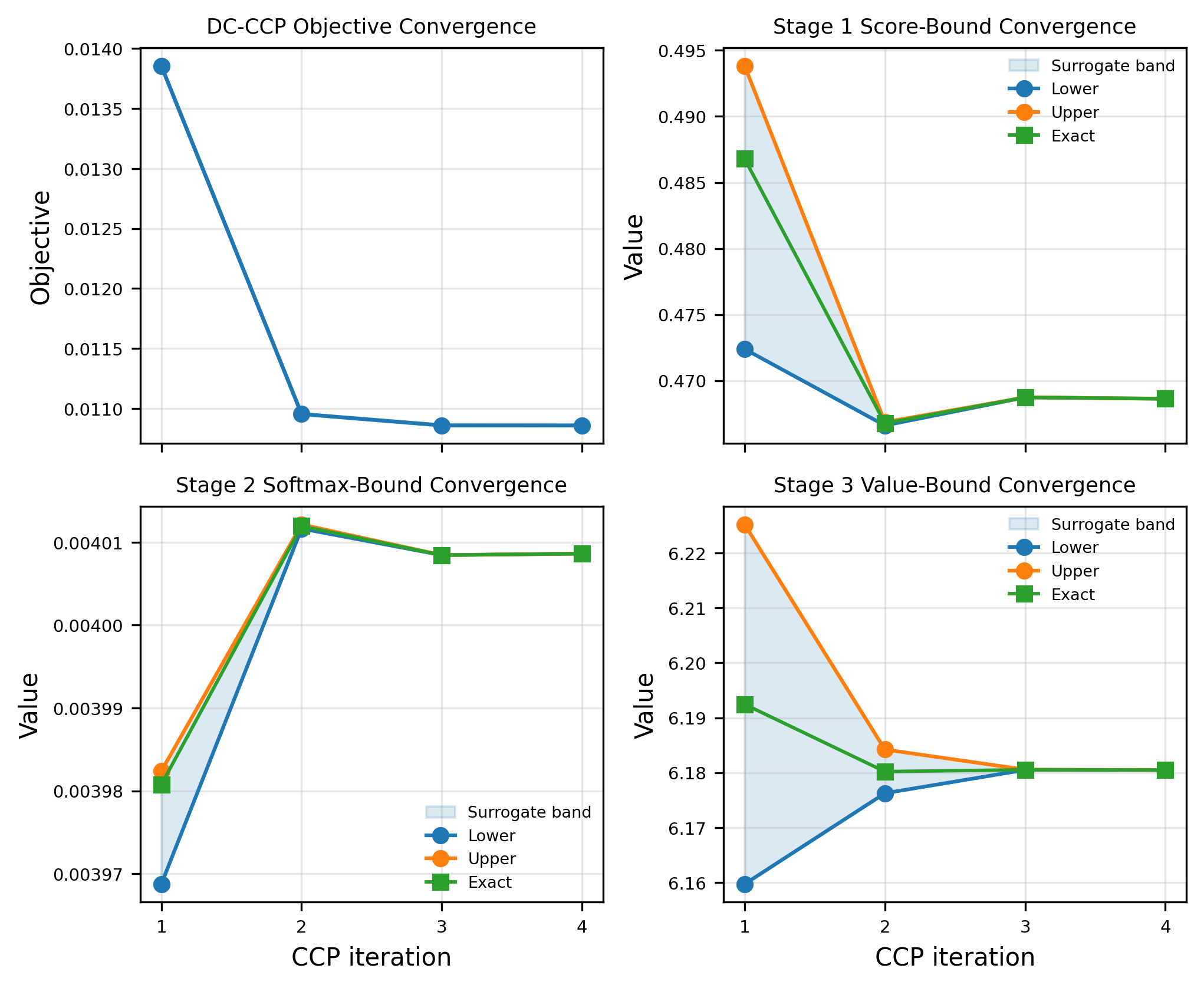} 
\caption{Convergence behavior of the proposed DC-CCP algorithm over CCP iterations.}
\label{fig:CCP_MPC bounds convergence}
\end{figure}
Figure \ref{fig:CCP_MPC bounds convergence} illustrates the bounds on the objective function and on representative entries of matrices $P$, $R$, $S$ in stages 1-3 of the TFE model at consecutive iterations of the CCP algorithm.
As expected from the analysis of Section~\ref{sec:converge}, the objective value converges after a few iterations, while the surrogate lower and upper bounds rapidly tighten around the optimal values of $P$, $R$, and $S$. This is consistent with the observation that Algorithm~1 reliably achieves convergence after just $3$ CCP iterations.

\section{Conclusions} \label{sec:conclu}
This paper investigates the integration of a transformer encoder–based multi-step predictor into an MPC framework, with online optimization carried out using the convex–concave procedure. The approach was evaluated on a benchmark nonlinear two-tank water level control problem. We derive convexification procedures tailored to key components of the transformer encoder architecture, with particular emphasis on the self-attention mechanism. Future work will focus on reducing the size of the DC program to enable longer prediction horizons and lower online computation, refining the DC approximations (e.g. via log-softmax relaxations), and extending the transformer encoder model to include multi-head attention.




\bibliography{reference}             
\end{document}